\numberwithin{equation}{section}
\theoremstyle{plain}
\newtheorem{theorem}{Theorem}[section]
\newtheorem{corollary}[theorem]{Corollary}
\newtheorem{lemma}[theorem]{Lemma}
\theoremstyle{definition}
\newtheorem{remark}[theorem]{Remark}
\newcommand{\E}{\mathbb{E}}
\newcommand{\W}{\dot{W}}
\newcommand{\ud}{\ensuremath{\mathrm{d}}}
\newcommand{\R}{\mathbb{R}}
\newcommand{\Erf}{\ensuremath{\mathrm{erf}}}
\newcommand{\Erfc}{\ensuremath{\mathrm{erfc}}}
\title{
  The third moment for the parabolic Anderson model}
\author{
{\bf Le Chen\footnote{University of Kansas, Department of Mathematics. 405 Snow Hall, 1460 Jayhawk Blvd. Lawrence, Kansas, 66045-7594, USA. {\it Email}: \url{chenle@ku.edu} or \url{chenle02@gmail.com}.}}
\\[1em]
University of Kansas
\date{\vspace{0em}\small \today}
}
\begin{document}
\maketitle
\begin{abstract}
In this paper, we study the {\it parabolic Anderson model} starting from the Dirac delta initial data:
\[
\left(\frac{\partial}{\partial t} -\frac{\nu}{2}\frac{\partial^2}{\partial x^2} \right)
u(t,x) = \lambda u(t,x) \W(t,x), \qquad u(0,x)=\delta_0(x), \quad x\in\R,
\]
where $\W$ denotes the space-time white noise.
By evaluating the threefold contour integral in the third moment formula by
Borodin and Corwin \cite{BC14Moment},
we obtain some explicit formulas for $\E[u(t,x)^3]$.
One application of these formulas is given to show the exact phase transition
for the intermittency front of order three.
\\

\noindent{\it Keywords.} The stochastic heat equation; 
	parabolic Anderson model;
	Dirac delta initial condition; space-time white noise; moment formula; intermittency fronts; growth indices.\\

	\noindent{\it \noindent AMS 2010 subject classification.}
	Primary 60H15; Secondary 35R60.
\end{abstract}

\setlength{\parindent}{1.5em}



\section{Introduction}

In this paper, we derive an explicit formula for
the third moment of the following {\it parabolic Anderson model} (PAM \cite{CarmonaMolchanov94}),
\begin{align}\label{E:SHE}
\begin{cases}
\displaystyle \left(\frac{\partial }{\partial t} -
\frac{\nu}{2}\frac{\partial^2}{\partial x^2} \right) u(t,x) =  \lambda u(t,x)
\:\dot{W}(t,x),&
x\in \R,\; t>0, \\
\displaystyle \quad u(0,\cdot) = \delta_0,
\end{cases}
\end{align}
where $\nu>0$ is the diffusion parameter and
$\delta_0$ is the Dirac delta measure with a unit mass at zero.
The noise is assumed to be the space-time white noise, i.e., 
\[
\E\left[\dot{W}(t,x)\dot{W}(s,y)\right] = \delta_0(t-s)\delta_0(x-y).
\]
The solution to \eqref{E:SHE} is understood in the mild form
\begin{align}\label{E:mild}
u(t,x)&= G_\nu(t,x) + \lambda \int_0^t \int_{\R^d} G(t-s,x-y)u(s,y)W(\ud s,\ud y),
\end{align}
where the stochastic integral in \eqref{E:mild} is in the sense of Walsh \cite{Walsh} and
\begin{align}
G_\nu(t,x)= \left(2\pi \nu t \right)^{-1/2} \exp\left(-\frac{x^2}{2\nu t}\right).
\end{align}

Explicit formulas both for the second moment and for the two-point correlation function were obtained in \cite{ChenDalang13Heat,ChenHuNualart15Two} for general initial measure $u(0,\cdot)=\mu(\cdot)$ such that $\left(|\mu|*G(t,\circ)\right)(x)<\infty$ for all $t>0$ and $x\in\R$. Here, $\mu=\mu_+-\mu_-$
is the {\it Jordan} decomposition and $|\mu|=\mu_++\mu_-$.
When the initial data is the delta measure with a unit mass at zero, i.e., $\mu=\delta_0$, these formulas reduce to the following simple forms (see Corollary 2.8 of \cite{ChenDalang13Heat}):
\begin{align}
\label{E2:TP-Delta}
\begin{aligned}
\E\left[u(t,x)u\left(t,y\right)\right] =
G_\nu(t,x)G_\nu\left(t,y\right)
+ &\frac{\lambda^2}{4\nu}G_{\nu/2}\left(t,\frac{x+y}{2}\right)\\
\times &
\exp\left(\frac{\lambda^4 t-2 \lambda^2 |x-y|}{4 \nu }\right)
   \Erfc\left(\frac{|x-y|-\lambda^2 t}{2
   \sqrt{\nu  t}}\right),
   \end{aligned}
\end{align}
and in particular, 
\begin{align}
\E\left[u(t,x)^2\right]=
G_{\nu/2}(t,x)
\left(\frac{\lambda^2}{\sqrt{4\pi\nu t}}+\frac{\lambda^4}{2\nu} e^{\frac{\lambda^4t}{4\nu}}
\Phi\left(\lambda^2\sqrt{\frac{t}{2\nu}}\right)\right).
\end{align}
In these formulas, we have used the following special functions
\[
\Phi(x) = \int_{-\infty}^x (2\pi)^{-1/2}e^{-y^2/2} \ud y, \quad
\Erf(x)=\frac{2}{\sqrt{\pi}}\int_0^x e^{-y^2}\ud y,\quad
\Erfc(x)=1-\Erf(x).
\]
Bertini and Cancrini obtained
the two-point correction function in an integral form; see Corollary 2.5 in \cite{BertiniCancrini94Intermittence}. This 
integral has been evaluated explicitly in \cite{ChenDalang13Heat},
which gives the same form as \eqref{E2:TP-Delta}.

\bigskip
On the other hand, for the delta initial condition, in a beautiful work by Borodin and Corwin \cite{BC14Moment}, it is showed that
for any $x_1\le \dots\le x_k$, 
\begin{align}\label{E:BCMac}
\E\left[\prod_{j=1}^k u(t,x_j)\right] 
=\frac{1}{(2\pi \imath )^k}\int\cdots\int \prod_{1\le A < B\le k}\frac{z_A-z_B}{z_A-z_B-\frac{\lambda^2}{\nu}}
\prod_{j=1}^k \exp\left(\frac{\nu t}{2} z_j^2 +x_jz_j\right)\ud z_j,
\end{align}
where $\imath=\sqrt{-1}$ and the $z_j$ integration is over $\alpha_j+\imath\R$ with
\begin{align}\label{E:alpha}
\alpha_1>\alpha_2+\frac{\lambda^2}{\nu}>\alpha_3+\frac{2\lambda^2}{\nu}>\cdots;
\end{align}
see Appendix A.2 in \cite{BC14Moment} where they assume $\nu=1$.
Thanks to condition \eqref{E:alpha}, this formula \eqref{E:BCMac} can be transformed into the following form by introducing another $k(k-1)/2$ integrals, which results in a formula with  $k(k+1)/2$ integrals:
\begin{align}\label{E:BCMac2}
\begin{aligned}
\E\left[\prod_{j=1}^k u(t,x_j)\right] 
=&\frac{1}{(2\pi \imath )^k}\int\cdots\int \quad \prod_{j=1}^k \exp\left(\frac{\nu t}{2} z_j^2 +x_jz_j\right)\ud z_j
\\
&\times \prod_{1\le A < B\le k}(z_A-z_B)
\int_0^\infty \exp\left(-s_{AB}\left[z_A-z_B-\frac{\lambda^2}{\nu}\right]\right)\ud s_{AB}.
\end{aligned}
\end{align}

\bigskip

In the following, we will first show that when $k=2$, one can recover \eqref{E2:TP-Delta}
by evaluating the double contour integrals in \eqref{E:BCMac}. 
Then we proceed to derive some formulas, more explicit than \eqref{E:BCMac}, for the third moment.
As an application, we establish the third exact phase transition (see below).
These results are summarized in the following three Theorems \ref{T:2ndMoment}, \ref{T:3rdMoment} and \ref{T:Front3rd}.

\begin{theorem}[Second moment]
\label{T:2ndMoment}
When $k=2$, the double contour integrals on the right-hand side of \eqref{E:BCMac} are equal to 
\[
G_{\nu}(t,x_1)G_{\nu}(t,x_2)+\frac{\lambda^2}{4\nu}
 G_{\nu/2}\left(t,\frac{x_1+x_2}{2}\right)
 \exp\left(\frac{t\lambda^4-2\lambda^2(x_2-x_1)}{4\nu}\right)
 \Erfc\left(\frac{(x_2-x_1)-t\lambda^2}{2\sqrt{\nu t}}\right),
\]
which recovers \eqref{E2:TP-Delta} (with $x_1\le x_2$).
\end{theorem}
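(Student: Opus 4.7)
The strategy is to evaluate the $k=2$ contour integral in \eqref{E:BCMac} via the partial-fraction identity
\[
\frac{z_1-z_2}{z_1-z_2-\lambda^2/\nu}=1+\frac{\lambda^2/\nu}{z_1-z_2-\lambda^2/\nu}.
\]
The ``$1$'' piece fully decouples the two variables: each factor $\frac{1}{2\pi\imath}\int_{\alpha_j+\imath\R}\exp(\tfrac{\nu t}{2}z_j^2+x_jz_j)\,\ud z_j$ is a standard Gaussian contour integral whose value, obtained by completing the square and integrating along any vertical line, equals $G_\nu(t,x_j)$. This produces the first summand in the claimed formula.

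For the second piece I would invoke the Laplace representation underlying \eqref{E:BCMac2}: since \eqref{E:alpha} forces $\mathrm{Re}(z_1-z_2-\lambda^2/\nu)>0$ on the contours,
\[
\frac{1}{z_1-z_2-\lambda^2/\nu}=\int_0^\infty e^{-s(z_1-z_2-\lambda^2/\nu)}\,\ud s.
\]
Fubini's theorem (the triple integrand has modulus $\tfrac{\lambda^2}{\nu}e^{-s(\alpha_1-\alpha_2-\lambda^2/\nu)}\prod_{j}e^{\nu t(\alpha_j^2-y_j^2)/2+x_j\alpha_j}$, which is jointly integrable thanks to \eqref{E:alpha}) permits interchange of the $s$-integral with the two contour integrals. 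Absorbing $e^{-sz_1}$ and $e^{sz_2}$ into the linear terms of the Gaussian factors turns each inner contour integral back into a heat kernel, and the piece reduces to
\[
\frac{\lambda^2}{\nu}\int_0^\infty e^{s\lambda^2/\nu}\,G_\nu(t,x_1-s)\,G_\nu(t,x_2+s)\,\ud s.
\]

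A direct completion of the square in the combined Gaussian exponent yields the factorisation
\[
G_\nu(t,x_1-s)G_\nu(t,x_2+s)=\tfrac{1}{2}\,G_{\nu/2}\!\left(t,\tfrac{x_1+x_2}{2}\right)G_{\nu/2}\!\left(t,s+\tfrac{x_2-x_1}{2}\right),
\]
which pulls the centre-of-mass kernel out of the $s$-integral. What remains is a one-dimensional integral $\int_0^\infty e^{as}G_{\nu/2}(t,s+c)\,\ud s$ with $a=\lambda^2/\nu$ and $c=(x_2-x_1)/2$; completing the square once more converts it into a shifted Gaussian tail, evaluating to $\tfrac{1}{2}\exp(\tfrac{a^2\nu t}{4}-ac)\,\Erfc(\tfrac{c-a\nu t/2}{\sqrt{\nu t}})$. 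Substituting $a,c$ back and collecting the scalar factors $\tfrac{\lambda^2}{\nu}\cdot\tfrac{1}{2}\cdot\tfrac{1}{2}=\tfrac{\lambda^2}{4\nu}$ reproduces the second summand of \eqref{E2:TP-Delta}. The whole argument is essentially algebraic bookkeeping; the only non-trivial step is the Fubini justification, which is immediate from \eqref{E:alpha}, so I do not anticipate any real obstacle.
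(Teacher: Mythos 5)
Your proof is correct, and it takes a genuinely different route from the paper. The paper does not split the kernel: it keeps the full factor $(z_1-z_2)/(z_1-z_2-\lambda^2/\nu)$ and applies the Laplace representation to the denominator only, as in \eqref{E:BCMac2}. The surviving polynomial factor $(z_1-z_2)$ then forces each contour integral to produce a first moment of a shifted Gaussian rather than a pure heat kernel, so the paper's $\ud z_2$ and $\ud z_1$ integrations carry prefactors $(\nu t z_1+x_2+s)$ and $(2s+x_2-x_1)$, and the final $s$-integral is of the form $\int_0^\infty(2s+x_2-x_1)e^{-(\cdot)}\ud s$, requiring both a Gaussian tail and a Gaussian first-moment evaluation (the $\Lambda_0/\Lambda_1$ pattern that the paper later systematizes in Lemma \ref{L:RealInt}). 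Your partial-fraction split $\frac{z_1-z_2}{z_1-z_2-\lambda^2/\nu}=1+\frac{\lambda^2/\nu}{z_1-z_2-\lambda^2/\nu}$ peels off the decoupled term $G_\nu(t,x_1)G_\nu(t,x_2)$ at the outset and leaves a remainder with no polynomial numerator, so the inner contour integrals are pure heat kernels and the final $s$-integral is a single Gaussian tail; I checked the bookkeeping (the factorisation $G_\nu(t,x_1-s)G_\nu(t,x_2+s)=\tfrac12 G_{\nu/2}(t,\tfrac{x_1+x_2}{2})G_{\nu/2}(t,s+\tfrac{x_2-x_1}{2})$, the exponent $\tfrac{a^2\nu t}{4}-ac=\tfrac{\lambda^4t-2\lambda^2(x_2-x_1)}{4\nu}$, and the constant $\tfrac{\lambda^2}{\nu}\cdot\tfrac12\cdot\tfrac12=\tfrac{\lambda^2}{4\nu}$) and it all lands exactly on the stated formula, with the Fubini step justified as you say by \eqref{E:alpha}. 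What your approach buys is transparency: the decomposition mirrors the chaos/Duhamel structure (free evolution plus a one-interaction correction) and avoids polynomial-weighted Gaussian integrals entirely. What the paper's approach buys is uniformity: keeping the Vandermonde factor intact is the pattern that scales to $k=3$ in Theorem \ref{T:3rdMoment}, where a partial-fraction decomposition of the product over pairs would be considerably messier.
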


Note that in case of $k=3$, there are six integrals in \eqref{E:BCMac2}.
In the next theorem, we will first evaluate the three contour integrals in \eqref{E:BCMac2} which leads to \eqref{E:3point}.
Then we proceed to evaluate two real integrals in \eqref{E:3point} leading to \eqref{E:3moment}.
Finally, by applying the mean-value theorem, we evaluate
the last real integral to obtain an explicit expression which is handy for applications; see \eqref{E:3momentAp}. 

\begin{theorem}[Third moment]
\label{T:3rdMoment}
Suppose that the initial data is the Dirac delta measure $\delta_0(x)$. The following statements are true:\\
(1) For all $t>0$ and $x_i\in\R$, $i=1,2,3$,
\begin{align}\notag
 &\E\left[u(t,x_1)u(t,x_2)u(t,x_3)\right]\\
 \notag
= & (\nu t)^{-9/2} (2\pi)^{-3/2}
\int_0^\infty \ud s_1 \: 
\exp\left(\frac{\lambda^2 s_1}{\nu}\right)
\int_0^\infty \ud s_2 \: 
\exp\left(\frac{\lambda^2 s_2}{\nu}\right)
\int_0^\infty \ud s_3 \: 
\exp\left(\frac{\lambda^2 s_3}{\nu}\right)\\
\notag
&\times
\left(-s_1+s_2+2 s_3+|x_3-x_2|\right)
\left( s_1+2 s_2+s_3+|x_3-x_1|\right)
\left( 2 s_1+s_2-s_3+|x_2-x_1|\right) 
\\
&\times 
\exp\left(
-\frac{(x_3+s_2+s_3)^2}{2\nu t}
-\frac{(x_2+s_1-s_3)^2}{2\nu t}
-\frac{(x_1-s_1-s_2)^2}{2\nu t}\right).
\label{E:3point}
\end{align}
(2) For all $x\in\R$, 
\begin{align}
\notag
\E[u(t,x)^3] 
=&  \frac{ \lambda^2}{2^{5/2}\pi\:\nu^3t^2}\exp\left(\frac{\lambda^2 t}{4\nu}-\frac{3x^2}{2\nu t}\right)\\
\notag
&\times 
\int_0^\infty  
 \Bigg[
 \left(3
   s+\lambda^2 t\right) 
   \exp\left(
   -\frac{3 s(s-2 \lambda^2 t)}{4 \nu t}
   \right)
   \Phi\left(\frac{\lambda^2 t+s}{\sqrt{2\nu t}}\right)
   \\
   \notag
 &\hspace{3.2em}+\left(3 s-\lambda^2 t\right)
      \exp\left(-\frac{ s(3s-2 \lambda^2 t)}{4 \nu t}\right)
      \Phi\left(\frac{\lambda^2 t-s}{\sqrt{2\nu t}}
      \right)\Bigg]\ud s\\
      \notag
 &+\frac{1}{(2\pi\nu t)^{3/2}}
 \exp\left(-\frac{3x^2}{2\nu t}\right)\\
 &  +\frac{\lambda^2}{2\sqrt{2} \pi\nu^2 t}
   \exp\left(\frac{\lambda^4 t}{4\nu}-\frac{3x^2}{2\nu t }\right) \left(1-\Phi\left(-\lambda^2 \sqrt{\frac{t}{2\nu}}\right)\right).
\label{E:3moment}
\end{align}
(3) For all $t>0$ and $x\in\R$, there are some two constants $a,b$ depending on $t$, $\nu$ and $\lambda$ in the following range
\begin{align}\label{E:range-ab}
0\le a\le \Phi\left(\lambda^2\sqrt{\frac{t}{2\nu}}\right) \le b
\le 1
\end{align}
such that 
\begin{subequations}
\label{E:3momentAp}
\begin{align}
 \E\left[u(t,x)^3\right] 
 =&\quad
\frac{1}{(2\pi\nu t)^{3/2}}
\exp\left(-\frac{3 x^2}{2 \nu t}\right)\\
   &+
   \frac{\lambda^2 (b-a)}{2 \sqrt{2}
   \pi  \nu^2 t}\exp\left(\frac{\lambda^4 t}{4 \nu}-\frac{3x^2}{2 \nu t}\right)\\
&+
\frac{\lambda^2}{2\sqrt{2} \pi 
   \nu ^2 t}\exp\left(\frac{\lambda^4 t}{4\nu
   }-\frac{3 x^2}{2 \nu  t}\right)
   \Phi\left(
   \lambda^2
   \sqrt{\frac{t}{2\nu
   }}\right)   \label{E:3momentC}\\
&+\frac{b \sqrt{2}\:\lambda^4}{\sqrt{3 \pi \nu^5 t}}
\exp\left(\frac{\lambda^4 t}{\nu
   }-\frac{3 x^2}{2 \nu  t}\right)
   \Phi\left(\lambda^2
   \sqrt{\frac{3t}{2\nu}}\right).
    \label{E:3momentD}
\end{align}
\end{subequations}
\end{theorem}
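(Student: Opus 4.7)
For Part (1), I would specialize \eqref{E:BCMac2} to $k=3$, relabel $s_{12},s_{13},s_{23}$ as $s_1,s_2,s_3$, and regroup by $z_j$. For each $j$, the $z_j$-exponent is $\tfrac{\nu t}{2}z_j^2 + b_j z_j$ with $b_1 = x_1 - s_1 - s_2$, $b_2 = x_2 + s_1 - s_3$, and $b_3 = x_3 + s_2 + s_3$. Using the basic identity $\tfrac{1}{2\pi i}\int_{\alpha + i\R} z^m e^{(\nu t/2)z^2 + bz}\, dz = \partial_b^m G_{\nu t}(b)$, where $G_{\nu t}(b) = (2\pi\nu t)^{-1/2} e^{-b^2/(2\nu t)}$, the Vandermonde factor $V(z_1,z_2,z_3) = (z_1-z_2)(z_1-z_3)(z_2-z_3)$ becomes the differential Vandermonde $V(\partial_{b_1},\partial_{b_2},\partial_{b_3})$ acting on $\prod_j G_{\nu t}(b_j)$. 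A direct calculation, in which the lower-order derivative terms cancel pairwise by the antisymmetry of $V$, produces $(\nu t)^{-3}\, V(b_1,b_2,b_3)\prod_j G_{\nu t}(b_j)$. WLOG $x_1 \le x_2 \le x_3$ by the symmetry of the third moment in its spatial arguments, and then $-(b_i - b_j)$ matches the three linear factors appearing in \eqref{E:3point}.

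For Part (2), I set $x_1 = x_2 = x_3 = x$ in \eqref{E:3point}. The $x$-linear pieces of the Gaussian exponent cancel and one is left with $\exp(-3x^2/(2\nu t) - Q(s)/(\nu t))$, where $Q(s) = s_1^2 + s_2^2 + s_3^2 + s_1 s_2 - s_1 s_3 + s_2 s_3$. The symmetric matrix of $Q$ has eigenvalues $0,\,3/2,\,3/2$ with null direction proportional to $(1,-1,1)$; I would apply the orthogonal change of variables $(s_1,s_2,s_3)\mapsto(\xi,\eta_1,\eta_2)$ aligning $\xi$ with the null direction. Under this rotation $Q$ becomes $\tfrac{3}{2}(\eta_1^2+\eta_2^2)$ and, crucially, the product $L_1 L_2 L_3$ of the three linear factors (which has no $\xi$-component) collapses to a constant multiple of $\eta_2^3 - 3\eta_1^2\eta_2 = \rho^3\cos(3\theta)$ in polar coordinates $(\rho,\theta)$. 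The region $\{s_i\ge 0\}$ becomes a wedge $\theta\in[-\pi/3,\pi/3]$ with $\xi$ constrained by linear bounds. The $\xi$-integral is exponential in $\xi$ and elementary, the $\theta$-integral can be evaluated in closed form thanks to the $\cos(3\theta)$ factor, and the remaining radial integration, after re-parameterization as the variable $s$, produces \eqref{E:3moment}; the three separate summands of \eqref{E:3moment} arise from the different boundary contributions of the $\xi$ and $\theta$ integrations.

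For Part (3), the integrand of the $s$-integral in \eqref{E:3moment} is $f_1 + f_2$, with $f_1(s) = (3s+\lambda^2 t)\,A_1(s)\,\Phi_+(s)$ and $f_2(s) = (3s-\lambda^2 t)\,A_2(s)\,\Phi_-(s)$, where $A_1(s) = e^{-3s^2/(4\nu t) + 3\lambda^2 s/(2\nu)}$, $A_2(s) = e^{-3s^2/(4\nu t) + \lambda^2 s/(2\nu)}$, and $\Phi_\pm(s) = \Phi((\lambda^2 t\pm s)/\sqrt{2\nu t})$. Since the weight $(3s+\lambda^2 t)A_1\ge 0$ on $[0,\infty)$ and $\Phi_+$ is increasing with range $[\Phi_0,1]$ where $\Phi_0 := \Phi(\lambda^2\sqrt{t/(2\nu)})$, the first mean-value theorem for integrals gives $\int_0^\infty f_1\,ds = b\, J_1'$ for some $b\in[\Phi_0,1]$, where $J_1' = \int_0^\infty (3s+\lambda^2 t)A_1\,ds$. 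Using the identity $(3s-3\lambda^2 t)A_1 = -2\nu t\, A_1'$, I would split $3s+\lambda^2 t = (3s-3\lambda^2 t) + 4\lambda^2 t$ to obtain $J_1' = 2\nu t + 4\lambda^2 t\int A_1\,ds$, the latter Gaussian tail producing the factor $\Phi(\lambda^2\sqrt{3t/(2\nu)})$ that appears in \eqref{E:3momentD}. For $f_2$ the weight $(3s-\lambda^2 t)A_2$ changes sign at $s = \lambda^2 t/3$; Bonnet's form of the second mean-value theorem (or, equivalently, integration by parts using $(3s-\lambda^2 t)A_2 = -2\nu t\,A_2'$) yields $\int_0^\infty f_2\,ds = a\cdot 2\nu t$ for some $a\in[0,\Phi_0]$. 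Combining with the two explicit non-integral terms of \eqref{E:3moment} and matching coefficients delivers \eqref{E:3momentAp}.

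The main difficulty is in Part (2): verifying the surprising cancellation that reduces $L_1 L_2 L_3$ to the remarkably simple $\rho^3\cos(3\theta)$ under the orthogonal rotation, and carefully tracking the integration region $\{s_i\ge 0\}$ through this change of variables. The computations in Parts (1) and (3) are, in principle, direct — the only twist in Part (3) being the sign-change in the weight of $f_2$, which necessitates Bonnet's form of the mean-value theorem (or an equivalent IBP) to keep $a$ in the nonnegative range.
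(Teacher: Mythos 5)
Your Part (1) is correct and is essentially the paper's own computation in a tidier package: the paper evaluates the three contour integrals one at a time by completing the square, which is exactly what your differential--Vandermonde identity $V(z_1,z_2,z_3)\mapsto V(\partial_{b_1},\partial_{b_2},\partial_{b_3})$ encodes; the cancellation of the lower-order derivative terms and the resulting factor $-(\nu t)^{-3}V(b_1,b_2,b_3)\prod_j G_{\nu t}(b_j)$ check out against \eqref{E:F0}. Your Part (3) also matches the paper's route (mean-value theorem to freeze the two $\Phi$ factors, then Lemma \ref{L:asbcsd} for the remaining Gaussian integrals), and you are in fact more careful than the paper in flagging that the weight $(3s-\lambda^2 t)A_2(s)$ changes sign at $s=\lambda^2 t/3$, so the first mean-value theorem does not apply verbatim to that term.

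The genuine gap is in Part (2). Your structural observations are all correct: the matrix of $Q$ has eigenvalues $0,\tfrac32,\tfrac32$ with null direction $(1,-1,1)$, each linear factor $L_i$ is orthogonal to that direction, and the gradients of $L_1,L_2,L_3$ in the orthogonal plane sit at mutual angles of $60^\circ$ and $120^\circ$, so $L_1L_2L_3$ is indeed proportional to $\rho^3\cos(3\theta-\theta_0)$. But the plan breaks down at the integration stage. The drift $\exp\left(\lambda^2(s_1+s_2+s_3)/\nu\right)$ is not radial in the $(\eta_1,\eta_2)$-plane, since $(1,1,1)$ is neither parallel nor orthogonal to $(1,-1,1)$; moreover the lower $\xi$-limit is a maximum of two linear functions of $\eta$, which splits the wedge into sub-wedges. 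Consequently, after the elementary $\xi$-integration the integrand carries factors of the form $e^{c\rho\cos(\theta-\psi)}$, and the angular integral of $\cos(3\theta-\theta_0)\,e^{c\rho\cos(\theta-\psi)}$ over a sub-wedge is not elementary --- the asserted closed-form $\theta$-integration ``thanks to the $\cos(3\theta)$ factor'' does not go through, and no route to the specific expression \eqref{E:3moment}, with its $\Phi\left((\lambda^2 t\pm s)/\sqrt{2\nu t}\right)$ factors, is actually exhibited. The paper instead integrates out $s_3$, then $s_2$, then $s_1$ iteratively, using the recursion for $\Lambda_n(\beta,t)=\int_0^\infty s^n e^{-s^2/t+\beta s}\,\ud s$ from Lemma \ref{L:RealInt} together with a long explicit antiderivative for the $\Erfc$-weighted piece that is verified by differentiation; some equally concrete one-variable-at-a-time computation is needed to actually reach \eqref{E:3moment}, and since Part (3) takes \eqref{E:3moment} as its starting point, that formula cannot be left as an unverified target.
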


\begin{remark}
It is known that $u(t,x)$ is strictly positive for all
$t>0$ and $x\in\R$ a.s.; see \cite{ChenKim14Comp}. Hence,
$\E\left[|u(t,x)|^3\right]=\E\left[u(t,x)^3\right]$.
\end{remark}

In the following,  let $u_\delta$ and $u_1$ denote the solutions to \eqref{E:SHE}
starting from the delta measure $\delta_0$ and Lebesgue's measure (i.e., $u(0,x)=1$), respectively.
The following corollary is a consequence of \eqref{E:3point} and the fact that 
\begin{align}
 \E\left[u_1(t,x)^k\right] = \int_{\R}\cdots \int_{\R}
 \E\left[\prod_{i=1}^k u_\delta(t,x_i)\right] \ud x_1\dots \ud x_k.
\end{align}

\begin{corollary}
 Suppose that the initial data is Lebesgue's measure, i.e., $u(0,x) = 1$. Then 
 \begin{align}\notag
\E\left[u_1(t,x)^3\right]
= & (\nu t)^{-9/2} (2\pi)^{-3/2}
\int_\R\ud x_1\int_\R\ud x_2\int_\R\ud x_3
\\ 
\notag
&\times \int_0^\infty \ud s_1 \: 
\exp\left(\frac{\lambda^2 s_1}{\nu}\right)
\int_0^\infty \ud s_2 \: 
\exp\left(\frac{\lambda^2 s_2}{\nu}\right)
\int_0^\infty \ud s_3 \: 
\exp\left(\frac{\lambda^2 s_3}{\nu}\right)\\
\notag
&\times
\left(-s_1+s_2+2 s_3+|x_3-x_2|\right)\\
\notag
&\times
\left( \quad s_1+2 s_2+s_3+|x_3-x_1|\right)\\
\notag
&\times \left( \quad  2 s_1+s_2-s_3+|x_2-x_1|\right) 
\\
&\times 
\exp\left(
-\frac{(x_3+s_2+s_3)^2}{2\nu t}
-\frac{(x_2+s_1-s_3)^2}{2\nu t}
-\frac{(x_1-s_1-s_2)^2}{2\nu t}\right).
\label{E:3m-Leb}
\end{align}
\end{corollary}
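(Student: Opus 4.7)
The plan is to establish the intermediate identity
\begin{align*}
\E\left[u_1(t,x)^3\right] = \int_\R \int_\R \int_\R \E\left[u_\delta(t,x_1) u_\delta(t,x_2) u_\delta(t,x_3)\right] \ud x_1 \ud x_2 \ud x_3
\end{align*}
stated just before the corollary, and then to substitute \eqref{E:3point} into the right-hand side.

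First I would invoke the linearity of the mild formulation \eqref{E:mild} in the initial data. Iterating the Picard scheme that defines the Walsh solution (or equivalently examining term-by-term the Wiener chaos expansion of $u_\mu$) shows that each chaos component depends linearly on $\mu$. Consequently, on a single probability space carrying the noise $\W$, one obtains the pathwise representation $u_\mu(t,x) = \int_\R u_{\delta_y}(t,x)\, \mu(\ud y)$ for any initial measure in the admissible class, and in particular $u_1(t,x) = \int_\R u_{\delta_y}(t,x)\,\ud y$. Spatial translation invariance of the space-time white noise then identifies, at the level of joint finite-dimensional laws, $\bigl(u_{\delta_{y_1}}(t,x), u_{\delta_{y_2}}(t,x), u_{\delta_{y_3}}(t,x)\bigr)$ with $\bigl(u_\delta(t,x-y_1), u_\delta(t,x-y_2), u_\delta(t,x-y_3)\bigr)$.

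Second, I would cube the superposition formula, take expectation, and interchange the expectation with the triple $y$-integral using Tonelli's theorem, which is legitimate because $u > 0$ almost surely (see the remark after Theorem \ref{T:3rdMoment}) so the integrand is nonnegative. The joint-law identification together with the change of variables $x_i = x - y_i$ yields the intermediate identity. Substituting \eqref{E:3point} for the integrand and reordering the $\ud x_i$ and $\ud s_j$ integrations (again by Tonelli) produces \eqref{E:3m-Leb}.

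The main obstacle is a genuine analytic one: one has to verify that the triple $x$-integral actually converges. This is not immediate from \eqref{E:3point}, since the linear factors such as $(-s_1 + s_2 + 2s_3 + |x_3 - x_2|)$ and its cyclic partners grow in the $x_i$, while only the last Gaussian line in \eqref{E:3point} provides any decay. The clean way is to complete the square in the three Gaussian exponents (or to translate $x_i \mapsto x_i + $ shifts involving the $s_j$), which reduces the $x$-integration to a Gaussian with a polynomial weight and leaves a residual $s$-integral that must then also be shown to be finite. An alternative a posteriori justification is to appeal to the known finiteness of all moments of $u_1(t,x)$ under Lebesgue initial data, which forces the right-hand side of \eqref{E:3m-Leb} to be finite once the superposition identity has been proved.
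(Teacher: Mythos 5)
Your overall route---superposition of the solution over the initial mass, followed by substitution of \eqref{E:3point}---is exactly the paper's: the paper simply asserts the identity $\E[u_1(t,x)^k]=\int_{\R^k}\E[\prod_i u_\delta(t,x_i)]\,\ud x_1\cdots\ud x_k$ without proof, so you are filling in details it omits. One of those details is not right as written. The identification of the joint law of $\bigl(u_{\delta_{y_1}}(t,x),u_{\delta_{y_2}}(t,x),u_{\delta_{y_3}}(t,x)\bigr)$ with that of $\bigl(u_\delta(t,x-y_1),u_\delta(t,x-y_2),u_\delta(t,x-y_3)\bigr)$ does \emph{not} follow from spatial translation invariance of the noise alone: the three solutions are driven by the \emph{same} realization of $\W$ but started from \emph{different} points, so no single spatial shift of the noise maps one triple onto the other. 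Writing $Z(a,b):=u_{\delta_b}(t,a)$, a diagonal shift by $-x$ (plus a reflection) only converts $\E[\prod_i Z(x,y_i)]$ into $\E[\prod_i Z(0,x-y_i)]=\E[\prod_i u_{\delta_{x_i}}(t,0)]$, i.e.\ it moves the displacements into the \emph{starting} points rather than the evaluation points. To land on $\E[\prod_i u_{\delta_0}(t,x_i)]$ you additionally need the time-reversal symmetry of the propagator, $\bigl(Z(a,b)\bigr)_{a,b}\overset{d}{=}\bigl(Z(b,a)\bigr)_{a,b}$ (reverse the noise in time), or else compute the mixed moments $\E[\prod_i u_{\delta_{y_i}}(t,x_i)]$ directly from the chaos expansion and read the symmetry off the kernels. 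This is standard but it is a genuinely missing ingredient in your argument.

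A smaller point: no Tonelli ``reordering of the $\ud x_i$ and $\ud s_j$ integrations'' is needed, since \eqref{E:3m-Leb} keeps the $x$-integrals outermost, which is exactly what direct substitution of \eqref{E:3point} produces---and it is just as well, because such a reordering would not be justified by Tonelli: the linear factors change sign, and the sixfold integral is in fact not absolutely convergent (after the substitution $u_1=x_1-s_1-s_2$, $u_2=x_2+s_1-s_3$, $u_3=x_3+s_2+s_3$ the Gaussian loses all decay in $s$ while $e^{\lambda^2(s_1+s_2+s_3)/\nu}$ remains). Your convergence worry about the outer $x$-integral is resolved exactly as you suggest a posteriori: the inner $s$-integral equals the nonnegative quantity $(\nu t)^{9/2}(2\pi)^{3/2}\,\E[\prod_i u_\delta(t,x_i)]$, and Tonelli applied to $\E\bigl[\bigl(\int_\R u_{\delta_y}(t,x)\,\ud y\bigr)^3\bigr]$---legitimate since $u_{\delta_y}>0$ a.s.---identifies its $\ud x_1\ud x_2\ud x_3$-integral with $\E[u_1(t,x)^3]<\infty$.
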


\begin{remark}
\label{R:kmoment}
Bertini and Cancrini studied \eqref{E:SHE} with $\lambda=1$ and claimed in Theorem 2.6 of \cite{BertiniCancrini94Intermittence} 
that 
\begin{align}\label{E:kmoment}
 \E\left[u_1(t,x)^k\right]
 &=2 \exp\left(\frac{k(k^2-1)}{4!\: \nu} t\right)
 \Phi\left(\sqrt{\frac{k(k^2-1)}{12\nu}t}\:\right).
\end{align}
X. Chen showed in \cite{ChenX15} that 
\eqref{E:kmoment} is correct only for $k=2$; see also Remark 2.6 in \cite{ChenDalang13Heat}.
Note that there are six integrals in \eqref{E:3m-Leb}. After integrals over $\ud x_2\ud x_3$, the expression becomes too complicated and too long to handle. 
We leave it to interested readers to simplify this formula.
\end{remark}

\begin{remark}[Asymptotics]
\label{R:asymp}
Note that the leading orders for large time $t$ both in \eqref{E:kmoment} with $k=3$ for $u_1$ and in \eqref{E:3momentAp} for $u_\delta$ are the same.
Actually, X. Chen \cite{ChenX15} showed that the asymptotics of the $k$-th moment
of $u_1$ (note that it is not $u_\delta$) \footnote{Note that in \cite{ChenX15}, the initial data is assumed to a nonnegative function that satisfies 
\[
0<\inf_{x\in\R} u(0,x)\le \sup_{x\in\R} u(0,x)<\infty.
\]
For studying asymptotic properties, this assumption is essentially equivalent to the case that $u(0,x)\equiv 1$.} does satisfy, as \eqref{E:kmoment}, that
\begin{align}
\lim_{t\rightarrow\infty}\frac{1}{t}\log\E\left[u_1(t,x)^k\right] =\frac{k(k^2-1)}{4! \:\nu}.
\end{align}
As an easy consequence of \eqref{E:3momentAp}, we see that this fact is still true for $u_\delta$, namely,
\begin{align}
\lim_{t\rightarrow\infty}\frac{1}{t}\log\E\left[u_\delta(t,x)^3\right] =
\left.\frac{\lambda^4 k(k^2-1)}{4! \:\nu}\right|_{k=3}
=\frac{\lambda^4}{\nu},\qquad\text{for all $x\in\R$.} 
\end{align}
Of course, the formula \eqref{E:3momentAp} itself contains more information
than these asymptotics.
\end{remark}
\bigskip

Now we state one application of these moment formulas.
It is known that the solution to \eqref{E:SHE} with a general initial condition is intermittent \cite{CarmonaMolchanov94, ChenDalang13Heat, FK09EJP}, which means informally that the solution in question develops many tall peaks.
An interesting phenomenon is that when the initial data has compact support, these tall peaks will propagate in space with certain speed depending on the value of $\lambda$; see some simulations in Figure \ref{fig}.
The spatial fronts of these tall peaks are called the {\it intermittency fronts}.
Conus and Khoshnevisan \cite{ConusKhosh10Farthest}
introduced the following {\it lower and upper growth indices of order $p$} to characterize these intermittency fronts,
\begin{align}
\label{E1:GrowInd-0}
\underline{\lambda}(p):= &
\sup\left\{\alpha>0: \underset{t\rightarrow \infty}{\lim\sup}
\frac{1}{t}\sup_{|x|\ge \alpha t} \log \E\left(|u(t,x)|^p\right) >0
\right\}\;,\\
\label{E1:GrowInd-1}
 \overline{\lambda}(p) := &
\inf\left\{\alpha>0: \underset{t\rightarrow \infty}{\lim\sup}
\frac{1}{t}\sup_{|x|\ge \alpha t} \log \E\left(|u(t,x)|^p\right) <0
\right\}\:.
\end{align}
We call the case $\underline{\lambda}(p) = \overline{\lambda}(p)$
the {\em $p$-th exact phase transition}.
Chen and Dalang \cite{ChenDalang13Heat} established the second exact phase
transition, namely, if the initial data is a nonnegative measure with compact
support, then
\begin{align}\label{E:Front2nd}
 \lambda(2):= \underline{\lambda}(2) =\overline{\lambda}(2)
=\frac{1}{2}\: \lambda^2.
\end{align}
This improves the result by Conus and Khoshnevisan \cite{ConusKhosh10Farthest}
that $(2\pi)^{-1}\lambda^2
\le \underline{\lambda}(2) \le \overline{\lambda}(2)
\le 2^{-1}\lambda^2$.
Chen and Kunwoo \cite{CK15SHE} studied the stochastic heat equation --
the equation with $\lambda u$ in \eqref{E:SHE}  replaced by $\sigma(u)$ --
on $\R^d$ subject to a Gaussian noise that is white in time and colored in
space. Both nontrivial lower and upper bounds for the second growth indices were
obtained. More recently,  Huang, L\^e and Nualart studied the PAM on $\R^d$ with
a Gaussian noise that may have
colors in both space and time; see \cite{HLN15} and \cite{HLN16}. They
obtained some nontrivial, sometimes
matching, bounds for the lower and upper growth indices of all orders $p\ge 2$.
In particular, they showed in the current setting that 
\begin{align}\label{E:HLN}
\lambda(p) = \underline{\lambda}(p)=\overline{\lambda}(p) =
\sqrt{\frac{p^2-1}{12}} \lambda^2.
\end{align}
The following theorem is an easy corollary of our moment formula, which
recovers their result \eqref{E:HLN} for $p=3$.

\begin{figure}[h!tbp]
 \centering
  \subfloat[$\lambda=3$]{\includegraphics[scale=0.4]{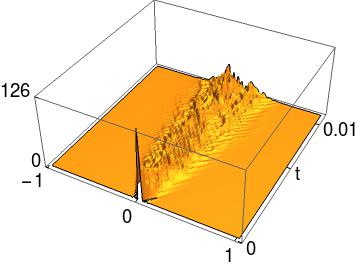}}%
 \hfill
 \subfloat[$\lambda=4$]{\includegraphics[scale=0.4]{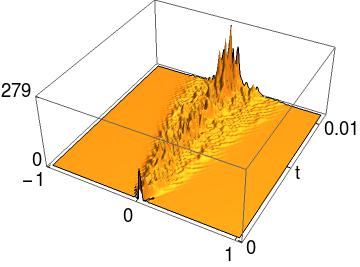}}%
 \hfill
 \subfloat[$\lambda=5$]{\includegraphics[scale=0.4]{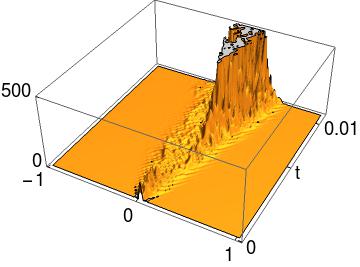}}
  \hfill
 \subfloat[$\lambda=6$]{\includegraphics[scale=0.4]{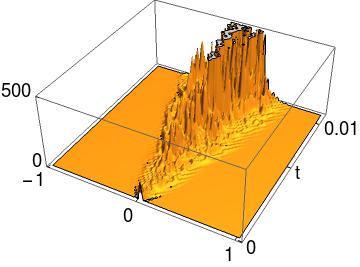}}
 \hfill
 \subfloat[$\lambda=7$]{\includegraphics[scale=0.4]{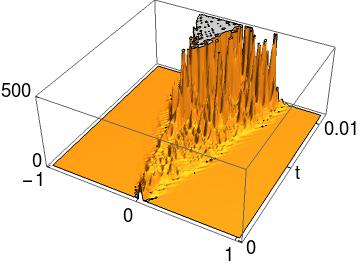}}
 \hfill
 \subfloat[$\lambda=8$]{\includegraphics[scale=0.4]{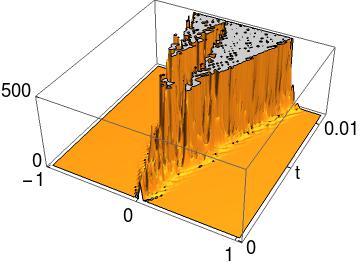}}
 \hfill
 \caption{Some simulations of the solution to \eqref{E:SHE} with various
$\lambda$ ranging from $3$ to $8$. These simulations are truncated at the level
$500$ in order to compare the size of these six space-time cones. The initial
data is $(2\pi \mu)^{-1/2}\exp\left(-x^2/(2\mu)\right)$ with $\mu=10^{-5}$,
which is an approximation to the delta initial measure.
 These simulations are made up to $t=0.01$.}
 \label{fig}
\end{figure}

\begin{theorem}[The third exact phase transition]
\label{T:Front3rd}
If $u(0,x)=\delta_0(x)$, then 
\begin{align}\label{E:Front3rd}
 \lambda(3):=\overline{\lambda}(3)=\underline{\lambda}(3)= 
\sqrt{\frac{2}{3}}\: \lambda^2.
\end{align}
\end{theorem}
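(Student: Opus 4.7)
The starting point is formula \eqref{E:3momentAp}, whose crucial feature is that the spatial variable $x$ enters only through the common Gaussian factor $\exp(-3x^2/(2\nu t))$, which is decreasing in $|x|$. Consequently, for every $\alpha>0$, the supremum $\sup_{|x|\ge \alpha t}\E[u(t,x)^3]$ is attained at $|x|=\alpha t$, and the whole analysis reduces to comparing the large-$t$ growth rates of the four summands evaluated at this endpoint. Multiplying each time prefactor by $\exp(-3\alpha^2 t/(2\nu))$ yields the rates
\[
r_1(\alpha)=-\frac{3\alpha^2}{2\nu},\qquad r_2(\alpha)=r_3(\alpha)=\frac{\lambda^4}{4\nu}-\frac{3\alpha^2}{2\nu},\qquad r_4(\alpha)=\frac{\lambda^4}{\nu}-\frac{3\alpha^2}{2\nu}.
\]
The maximal rate $r_4$ vanishes exactly at the candidate threshold $\alpha_*:=\sqrt{2/3}\,\lambda^2$, which coincides with $\sqrt{(p^2-1)/12}\,\lambda^2$ at $p=3$, matching \eqref{E:HLN}.

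For the upper bound $\overline{\lambda}(3)\le \alpha_*$, fix any $\alpha>\alpha_*$. Since $\alpha_*=2/\sqrt{6}\,\lambda^2>\lambda^2/\sqrt{6}$ (the root of $r_2=r_3$), all four rates $r_j(\alpha)$ are strictly negative. Using the trivial bounds $a,b\in[0,1]$ and $\Phi\le 1$ in \eqref{E:3momentAp}, the full third moment is dominated by a constant multiple of $(1+t^{-3/2})\exp(r_4(\alpha)t)$, so
\[
\limsup_{t\to\infty}\frac{1}{t}\log\sup_{|x|\ge \alpha t}\E[u(t,x)^3]\;\le\; r_4(\alpha)\;<\;0,
\]
which gives $\overline{\lambda}(3)\le \alpha_*$.

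For the matching lower bound $\underline{\lambda}(3)\ge \alpha_*$, fix any $\alpha<\alpha_*$ and evaluate \eqref{E:3momentAp} at the single point $x=\alpha t$. Because the ordering $0\le a\le b\le 1$ in \eqref{E:range-ab} makes every summand nonnegative, one may simply drop the first three terms and retain only the dominant term \eqref{E:3momentD}. Replacing $b$ by its lower bound $\Phi(\lambda^2\sqrt{t/(2\nu)})\to 1$ and using $\Phi(\lambda^2\sqrt{3t/(2\nu)})\to 1$, the surviving term satisfies $\tfrac{1}{t}\log(\cdot)\to r_4(\alpha)>0$, hence $\underline{\lambda}(3)\ge\alpha_*$. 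Combined with the trivial inequality $\underline{\lambda}(3)\le \overline{\lambda}(3)$, this proves \eqref{E:Front3rd}.

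The only mildly delicate point is verifying that each summand in \eqref{E:3momentAp} is nonnegative so that the three subdominant terms may be legitimately discarded in the lower bound; this is immediate from $0\le a\le b\le 1$ provided by \eqref{E:range-ab}. Everything else is straightforward bookkeeping of exponential rates, so no genuine obstacle is expected; the theorem is essentially read off from \eqref{E:3momentAp} once the common Gaussian factor $\exp(-3x^2/(2\nu t))$ is isolated.
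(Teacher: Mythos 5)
Your proof is correct and follows essentially the same route as the paper: both read off the exponential growth rate $\lambda^4/\nu-3\alpha^2/(2\nu)$ of \eqref{E:3momentAp} along $|x|=\alpha t$ and locate its sign change at $\alpha=\sqrt{2/3}\,\lambda^2$. You merely make explicit the upper/lower-bound bookkeeping (nonnegativity of each summand via \eqref{E:range-ab}, $b\to 1$, and the reduction of the supremum to $|x|=\alpha t$) that the paper's two-line argument leaves implicit.
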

\begin{proof}
Because $b$ in \eqref{E:range-ab} goes to $1$ as $t$ goes to infinity, we see that
\begin{align}
\label{E_:limsup}
\lim_{t\rightarrow\infty}\frac{1}{t}\sup_{|x|\ge \alpha t}\log\E\left(u(t,x)^p\right) = \frac{\lambda^4}{\nu}-\frac{3\alpha^2}{2\nu}.
\end{align}
Hence, 
\begin{align}
\lim_{t\rightarrow\infty}\frac{1}{t}\sup_{|x|\ge \alpha t}\log\E\left(u(t,x)^p\right)> 0
\qquad\Longleftrightarrow\qquad
\alpha < \sqrt{\frac{2}{3}} \:\lambda^2, 
\end{align}
which shows that $\underline{\lambda}(3)=\sqrt{2/3} \:\lambda^2$. Similarly, one can show that 
$\overline{\lambda}(3)=\sqrt{2/3} \:\lambda^2$.
\end{proof}

In the following, we will prove Theorems \ref{T:2ndMoment} and \ref{T:3rdMoment} in Sections \ref{S:2ndMoment} and \ref{S:3rdMoment}, respectively.

\section{Second moment: Proof of Theorem \ref{T:2ndMoment}}\label{S:2ndMoment}
The proof of Theorem \ref{T:2ndMoment} severs as a warm-up for the more involved proof of Theorem \ref{T:3rdMoment}.

\begin{proof}[Proof of Theorem \ref{T:2ndMoment}]
Suppose that $x_1\le x_2$. Fix any $\alpha_1>\alpha_2+\frac{\lambda^2}{\nu}$. Then,
from \eqref{E:BCMac2},
\begin{align}
\notag
\E\left[u(t,x_1)u(t,x_2)\right]=&\frac{1}{(2\pi\imath)^2}\iint
\left(\int_0^\infty \exp\left(-\left[ (z_1-z_2) - \frac{\lambda^2}{\nu}\right]s\right)\ud s \right)\\ \notag
&\times (z_1-z_2)\exp\left(\frac{\nu t}{2}z_1^2+x_1z_1\right)
\exp\left(\frac{\nu t}{2}z_2^2+ x_2z_2\right)\ud z_1\ud z_2\\ \notag
=& 
\frac{1}{(2\pi\imath)^2}
\int_0^\infty \ud s \exp\left(\frac{\lambda^2}{\nu}s\right)
\int \ud z_1 \exp\left(\frac{\nu t}{2}z_1^2+(x_1-s)z_1\right)\\
&\times\int \ud z_2 \:
(z_1-z_2)\exp\left(\frac{\nu t}{2}z_2^2+(x_2+s)z_2\right).
\end{align}
The $\ud z_2$ integration over $\alpha_2+\imath\R$ is equal to
\begin{align}\notag
 \int \ud z_2 \:&
(z_1-z_2)\exp\left(\frac{\nu t}{2}z_2^2+(x_2+s)z_2\right)\\ \notag
&=
\frac{\sqrt{2\pi}}{\sqrt{\nu t}} \exp\left(-\frac{(x_2+s)^2}{2\nu t}\right)\\ \notag
&\quad\times \int \ud z_2 \:
\left(\left[z_1+\frac{x_2+s}{\nu t}\right]-
\left[z_2+\frac{x_2+s}{\nu t}\right]\right)
\frac{\exp\left(\frac{\nu t}{2}\left[z_2+\frac{x_2+s}{\nu t} \right]^2\right)}{\sqrt{2\pi/(\nu t)}}
\\
&=\imath \frac{\sqrt{2\pi}}{(\nu t)^{3/2}}
\exp\left(-\frac{\left(s+x_2\right)^2}{2 \nu 
   t}\right) \left( \nu  t z_1+ (x_2+s)\right).
\end{align}
The $\ud z_1$ integration over $\alpha_1+\imath\R$ gives
\begin{align}\notag
\imath \int  &\exp\left(\frac{\nu t}{2}z_1^2+ (x_1-s)z_1\right) 
\frac{\sqrt{2 \pi }}{(\nu t)^{3/2}}
\exp\left(-\frac{\left(s+x_2\right)^2}{2 \nu 
   t}\right) \left( \nu  t z_1+(x_2+s)\right)\ud z_1\\
   \notag
=&
\imath\frac{2\pi}{\nu t} 
\exp\left(-\frac{(x_1-s)^2+(x_2+s)^2}{2 \nu  t}\right)\\
\notag
&\times \int \ud z_1 
\frac{1}{\sqrt{2\pi/(\nu t)}}\exp\left(\frac{\nu t}{2}\left[z_1+ \frac{x_1-s}{\nu t}\right]^2\right) \left(\left[z_1+\frac{x_1-s}{\nu t}\right]+\frac{x_2-x_1+2s}{\nu t}\right)
\\
=&
 \imath^2 \frac{ 2  \pi  }{(\nu t)^2}\exp\left(-\frac{(x_1-s)^2+(x_2+s)^2}{2 \nu  t}\right)\left(2 s+x_2-x_1\right).
\end{align}
Hence,
\begin{align}\notag
 \E&[u(t,x_1)u(t,x_2)]\\ \notag
 &=\int_0^\infty
 \frac{ \left(2 s+x_2-x_1\right)}{2\pi(\nu t)^2}\exp\left(-\frac{(x_1-s)^2+(x_2+s)^2}{2 \nu  t} +
 \frac{\lambda^2 s}{\nu}\right)\ud s\\ \notag
 &=
 \frac{1}{\pi\nu t}\exp\left(-\frac{x_1^2+x_2^2}{2\nu t}+\frac{(x_1-x_2+t\lambda^2)^2}{4\nu t}\right)
 \\ \notag &\qquad \times 
 \int_0^\infty
 \frac{ \left(s- \frac{x_1-x_2}{2}\right)}{\nu t}
 \exp\left(-\frac{1}{\nu  t}\left(s-\frac{x_1-x_2+t\lambda^2}{2}\right)^2\:\right)\ud s\\ \notag
 &=
 \frac{1}{\pi\nu t}\exp\left(-\frac{x_1^2+x_2^2}{2\nu t}+\frac{(x_1-x_2+t\lambda^2)^2}{4\nu t}\right)
 \int_{\frac{x_2-x_1-t\lambda^2}{2\sqrt{\nu t}}}^\infty
 \left(z+\frac{\lambda^2\sqrt{t}}{\sqrt{\nu}}\right)
 e^{-z^2}\ud z\\ \notag
 &=
 \frac{1}{\pi\nu t}\exp\left(-\frac{x_1^2+x_2^2}{2\nu t}+\frac{(x_1-x_2+t\lambda^2)^2}{4\nu t}\right)
 \\ \notag
&\qquad\times \left(\frac{1}{2}\exp\left(-\frac{(x_1-x_2+t\lambda^2)^2}{4\nu t}\right)
 +\frac{\lambda^2 \sqrt{\pi t}\: }{2\sqrt{\nu}}\Erfc\left(\frac{x_2-x_1-t\lambda^2}{2\sqrt{\nu t}}\right)\right)\\ \notag
 &=G_{\nu}(t,x_1)G_{\nu}(t,x_2)\\ 
 &\qquad+\frac{\lambda^2}{4\nu}
 G_{\nu/2}\left(t,\frac{x_1+x_2}{2}\right)
 \exp\left(\frac{t\lambda^4-2\lambda^2(x_2-x_1)}{4\nu}\right)
 \Erfc\left(\frac{(x_2-x_1)-t\lambda^2}{2\sqrt{\nu t}}\right).
\end{align}
This proves Theorem \ref{T:2ndMoment}.
\end{proof}

\section{Third moment: Proof of Theorem \ref{T:3rdMoment}}
\label{S:3rdMoment}

We first prove two lemmas. 
For all $\beta\in\R$, $t>0$ and $n\ge 0$, denote 
\begin{align}\label{E:DefLambda}
\Lambda_n(\beta,t):=\int_0^\infty \exp\left(-\frac{s^2}{t}+\beta s\right)s^n
\ud s.
\end{align}
\begin{lemma}
\label{L:RealInt}
For all $t>0$ and $\beta\in\R$, 
\begin{align}\label{E:L0}
\Lambda_0(\beta,t) &=
\frac{\sqrt{\pi t}}{2} \: \exp\left(\frac{\beta^2
   t}{4}\right) \left(2-\Erfc\left(\frac{\beta 
   \sqrt{t}}{2}\right)\right),\\
    \label{E:L1}
\Lambda_1(\beta,t) &=
\frac{t}{2}+\frac{\sqrt{\pi}  t^{3/2}\beta }{4}
   \exp\left(\frac{\beta ^2 t}{4}\right)
   \left(2-\Erfc\left(\frac{\beta 
   \sqrt{t}}{2}\right)\right),
\end{align}
and for all $n\ge 2$,
\begin{align}\label{E:Lambda}
\Lambda_n(\beta,t)  = \frac{\beta t}{2}\Lambda_{n-1}(\beta,t)+ \frac{ t (n-1)}{2}\Lambda_{n-2}(\beta,t).
\end{align}
In particular, we have that 
\begin{align}
   \label{E:L2}
\Lambda_2(\beta,t)&=\frac{\beta t^2}{4} +\frac{\sqrt{\pi } t^{3/2} \left(\beta ^2 t+2\right)}{8}
\exp\left(\frac{\beta^2t}{4}\right) 
   \left(2-\Erfc\left(\frac{\beta 
   \sqrt{t}}{2}\right)\right),\\
\label{E:L3}
\Lambda_3(\beta,t)&=\frac{t^2}{2}+\frac{\beta^2 t^3}{8} + 
\frac{\sqrt{\pi}\beta t^{5/2}\left(\beta ^2 t+6\right)}{16}
\exp\left(\frac{\beta^2 t}{4}\right)
\left(2-\Erfc\left(\frac{\beta 
   \sqrt{t}}{2}\right)\right).
\end{align}
\end{lemma}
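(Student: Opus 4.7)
My plan is to handle the base case $\Lambda_0$ by completing the square, obtain $\Lambda_1$ either by differentiation in $\beta$ or by a direct boundary calculation, and then establish the recurrence \eqref{E:Lambda} by integration by parts; the explicit formulas \eqref{E:L2} and \eqref{E:L3} then follow by plugging into the recurrence.

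For \eqref{E:L0}, I would rewrite the exponent as $-s^2/t+\beta s=-\tfrac{1}{t}(s-\beta t/2)^2+\beta^2 t/4$, substitute $u=(s-\beta t/2)/\sqrt{t}$, and recognize the resulting integral over $[-\beta\sqrt{t}/2,\infty)$ of $e^{-u^2}\,du$ as $\frac{\sqrt{\pi}}{2}(1+\Erf(\beta\sqrt{t}/2))=\frac{\sqrt{\pi}}{2}(2-\Erfc(\beta\sqrt{t}/2))$. For \eqref{E:L1} I would observe that $\partial_\beta e^{-s^2/t+\beta s}=s\,e^{-s^2/t+\beta s}$ so that $\Lambda_n=\partial_\beta^n\Lambda_0$, and differentiate \eqref{E:L0} once, using $\frac{d}{d\beta}\Erfc(\beta\sqrt{t}/2)=-\frac{\sqrt{t}}{\sqrt{\pi}}\,e^{-\beta^2 t/4}$; the $e^{\beta^2 t/4}$ and $e^{-\beta^2 t/4}$ factors cancel in one of the two resulting terms, leaving the bare constant $t/2$ in \eqref{E:L1}.

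The recurrence \eqref{E:Lambda} is the heart of the lemma, and I would derive it by differentiating the primitive $s^{n-1}e^{-s^2/t+\beta s}$. Explicitly,
\begin{align*}
\frac{d}{ds}\left(s^{n-1}e^{-s^2/t+\beta s}\right)=(n-1)s^{n-2}e^{-s^2/t+\beta s}-\tfrac{2}{t}s^{n}e^{-s^2/t+\beta s}+\beta s^{n-1}e^{-s^2/t+\beta s}.
\end{align*}
For $n\ge 2$ the boundary term $s^{n-1}e^{-s^2/t+\beta s}\big|_0^\infty$ vanishes (the exponential Gaussian decay at infinity dominates polynomial growth, and $s^{n-1}=0$ at $s=0$), so integrating both sides over $[0,\infty)$ gives $0=(n-1)\Lambda_{n-2}-\tfrac{2}{t}\Lambda_n+\beta\Lambda_{n-1}$, which rearranges to \eqref{E:Lambda}. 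Finally, \eqref{E:L2} follows from \eqref{E:Lambda} applied with $n=2$ together with \eqref{E:L0} and \eqref{E:L1}, and \eqref{E:L3} follows from \eqref{E:Lambda} with $n=3$ together with \eqref{E:L1} and \eqref{E:L2}; both are straightforward algebraic simplifications.

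I do not anticipate a serious obstacle: the only care needed is in checking signs around the Erfc derivative and in verifying the vanishing of boundary terms in the integration by parts (which fails at $s=0$ for $n=1$, but that case is already handled separately). The main bookkeeping effort is the final simplification producing the coefficients $\beta^2 t+2$ in \eqref{E:L2} and $\beta^2 t+6$ in \eqref{E:L3}.
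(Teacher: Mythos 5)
Your proposal is correct and follows essentially the same route as the paper: completing the square for $\Lambda_0$ and an integration by parts identity yielding the recurrence \eqref{E:Lambda}, with \eqref{E:L2}--\eqref{E:L3} obtained by substitution. The only (harmless) variation is that you obtain $\Lambda_1$ by differentiating the closed form of $\Lambda_0$ in $\beta$, whereas the paper uses the same integration-by-parts device as for $n\ge 2$, where the nonvanishing boundary term at $s=0$ produces the constant $t/2$; both computations are routine and give \eqref{E:L1}.
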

\begin{proof}
We first calculate $\Lambda_0(\beta,t)$:
\begin{align*}
\Lambda_0(\beta,t) &=
\int_0^\infty \exp\left(-\frac{1}{t}\left(s-\frac{t\beta}{2}\right)^2+\frac{t\beta^2}{4}\right)
\ud s\\
&=\sqrt{t}\: \exp\left(\frac{t\beta^2}{4}\right)
\int_0^\infty \exp\left(-\left(z-\frac{\beta\sqrt{t}}{2}\right)^2\right)\ud z\\
&= 
\sqrt{t}\: \exp\left(\frac{t\beta^2}{4}\right)
\frac{\sqrt{\pi}}{2} \left(1+\Erf\left(\frac{\beta\sqrt{t}}{2}\right)\right).
\end{align*}
As for $\Lambda_1(\beta,t)$, 
\begin{align*}
\Lambda_1(\beta,t)&= -\frac{t}{2}\int_0^\infty \left[\left(-\frac{s^2}{t}+\beta s\right)'-\beta\right] \exp\left(-\frac{s^2}{t}+\beta s\right)  \ud s\\
&=-\frac{t}{2} \int_0^\infty  \ud \exp\left(-\frac{s^2}{t}+\beta s\right)  + 
\frac{\beta t}{2} \Lambda_0(\beta,t)\\
&= 
\frac{t}{2} + 
\frac{\beta t}{2} \Lambda_0(\beta,t),
\end{align*}
which is equal to the right-hand side of \eqref{E:L1} after simplification. 
For any $n\ge 2$, 
\begin{align*}
\Lambda_n(\beta,t)&= -\frac{t}{2}\int_0^\infty \left[\left(-\frac{s^2}{t}+\beta s\right)'-\beta\right]s^{n-1} \exp\left(-\frac{s^2}{t}+\beta s\right)  \ud s\\
&=-\frac{t}{2} \int_0^\infty s^{n-1} \ud \exp\left(-\frac{s^2}{t}+\beta s\right)  + 
\frac{\beta t}{2} \Lambda_{n-1}(\beta,t)\\
&= 
-\frac{t}{2} \left( 0- (n-1)\int_0^\infty s^{n-2} \exp\left(-\frac{s^2}{t}+\beta s\right)   \ud s \right)+ 
\frac{\beta t}{2} \Lambda_{n-1}(\beta,t)\\
&= \frac{t(n-1)}{2} \Lambda_{n-2}(\beta,t) + \frac{\beta t}{2} \Lambda_{n-1}(\beta,t),
\end{align*}
which proves \eqref{E:Lambda}.
In particular,
\begin{align*}
 \Lambda_2(\beta,t) &=
\frac{\beta t^2}{4} + \left(\frac{t}{2}+\frac{\beta^2 t^2}{4}\right)\Lambda_0(\beta,t),\\
\Lambda_3(\beta,t)&=
\frac{t^2}{2}+\frac{\beta^2 t^3}{8} + \left(\frac{3 \beta  t^2}{4}+\frac{\beta ^3 t^3}{8}\right)\Lambda_0(\beta,t).
\end{align*}
Then by \eqref{E:L0}, one obtains \eqref{E:L2} and \eqref{E:L3}.
This completes the proof of Lemma \ref{L:RealInt}.
\end{proof}

The next lemma is a corollary of Lemma \ref{L:RealInt}. We nevertheless state it separately for the convenience of our application.

\begin{lemma}\label{L:asbcsd}
For all $a,b,d\in\R$ and $c>0$, it holds that
\begin{align}
 \int_0^\infty (as+b)\exp\left(-cs(s-d)\right)\ud s
 =\frac{\sqrt{\pi } (a d+2b)}{2\sqrt{c}}
   \exp\left(\frac{c d^2}{4}\right) \Phi\left(\frac
   {\sqrt{2c}\:d}{2}\right)+\frac{a}{2c}.
\end{align}
\end{lemma}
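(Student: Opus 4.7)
The plan is to reduce this integral directly to the quantities $\Lambda_0$ and $\Lambda_1$ from Lemma \ref{L:RealInt}. Expanding the exponent gives $-cs(s-d) = -cs^2 + cds$, which is precisely of the form $-s^2/t + \beta s$ with the identification $t = 1/c$ and $\beta = cd$. Splitting the integrand as $(as+b)$ times the exponential, I would write
\begin{align*}
\int_0^\infty (as+b)\exp\left(-cs(s-d)\right)\ud s = a\,\Lambda_1(cd,1/c) + b\,\Lambda_0(cd,1/c).
\end{align*}

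Next I would substitute the closed forms \eqref{E:L0} and \eqref{E:L1}. Under the substitution $t=1/c,\beta=cd$ we have $\beta^2 t/4 = cd^2/4$ and $\beta\sqrt{t}/2 = \sqrt{c}\,d/2$, so the two pieces become
\begin{align*}
\Lambda_0(cd,1/c) &= \frac{\sqrt{\pi}}{2\sqrt{c}}\exp\!\left(\frac{cd^2}{4}\right)\left(2-\Erfc\!\left(\frac{\sqrt{c}\,d}{2}\right)\right),\\
\Lambda_1(cd,1/c) &= \frac{1}{2c} + \frac{\sqrt{\pi}\,d}{4\sqrt{c}}\exp\!\left(\frac{cd^2}{4}\right)\left(2-\Erfc\!\left(\frac{\sqrt{c}\,d}{2}\right)\right).
\end{align*}

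To match the target expression involving $\Phi$ rather than $\Erfc$, I would invoke the elementary identity $2-\Erfc(x) = 1+\Erf(x) = 2\Phi(\sqrt{2}\,x)$, which converts $2-\Erfc(\sqrt{c}\,d/2)$ into $2\Phi(\sqrt{2c}\,d/2)$. Combining the two contributions, the coefficient of the $\Phi$-term is $\frac{a\sqrt{\pi}\,d}{2\sqrt{c}} + \frac{b\sqrt{\pi}}{\sqrt{c}} = \frac{\sqrt{\pi}(ad+2b)}{2\sqrt{c}}$, while the constant piece is $\frac{a}{2c}$, which matches the right-hand side of the lemma.

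There is no real obstacle here beyond careful bookkeeping of the constants $c$ and $d$ under the change of variables; the lemma is essentially a cosmetic repackaging of Lemma \ref{L:RealInt}, phrased in the variables $(a,b,c,d)$ and with $\Erfc$ replaced by $\Phi$ for direct use in the proof of Theorem \ref{T:3rdMoment}.
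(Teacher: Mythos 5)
Your proposal is correct and is exactly the route the paper intends: the paper gives no written proof, remarking only that the lemma is a corollary of Lemma \ref{L:RealInt}, and your reduction via $t=1/c$, $\beta=cd$ to $a\,\Lambda_1+b\,\Lambda_0$ together with the identity $2-\Erfc(x)=2\Phi(\sqrt{2}\,x)$ fills in precisely that omitted computation. The constants all check out.
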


\bigskip
Now we are ready to prove Theorem \ref{T:3rdMoment}.

\begin{proof}[Proof of Theorem \ref{T:3rdMoment}]
{\bf\noindent (1)~}
Fix $x_1, x_2, x_3\in\R$ with $x_1\le x_2\le x_3$, $t>0$, and $\alpha_1,\alpha_2,\alpha_3\in\R$ with $\alpha_1>\alpha_2+\frac{\lambda^2}{\nu}>\alpha_3+2\frac{\lambda^2}{\nu}$. 
Similar to the proof of Theorem \ref{T:2ndMoment},
from \eqref{E:BCMac2}, we have that
\begin{align}\notag
\E&\left[u(t,x_1)u(t,x_2)u(t,x_3)\right]\\ \notag
&= 
\frac{1}{(2\pi\imath)^3}
\int_0^\infty \ud s_1 \: \exp\left(\frac{\lambda^2}{\nu}s_1\right)
\int_0^\infty \ud s_2 \: \exp\left(\frac{\lambda^2}{\nu}s_2\right)
\int_0^\infty \ud s_3 \: \exp\left(\frac{\lambda^2}{\nu}s_3\right)\\ \notag
&\quad\times 
\int\ud z_1
\exp\left(\frac{\nu t}{2}z_1^2+(x_1-s_1-s_2)z_1\right)\\ \notag
&\quad\times
\int\ud z_2
\exp\left(\frac{\nu t}{2}z_2^2+(x_2+s_1-s_3)z_2\right)\\ 
&\quad\times 
\int \ud z_3
(z_1-z_2)(z_1-z_3)(z_2-z_3)
\exp\left(\frac{\nu t}{2}z_3^2+(x_3+s_2+s_3)z_3\right).
\end{align}
Now we will calculate the triple integral over $\ud z_3\ud z_2\ud z_1$.
Recall that $x_1\le x_2\le x_3$.
The $\ud z_3$-integral is equal to 
\begin{align}
\notag
\int \ud z_3
(z_1-z_2)&(z_1-z_3)(z_2-z_3)
\exp\left(\frac{\nu t}{2}z_3^2+(x_3+s_2+s_3)z_3\right)\\ \notag
=&
(z_1-z_2)\int \ud z_3
(z_1-z_3)(z_2-z_3)
\exp\left(\frac{\nu t}{2}z_3^2+(x_3+s_2+s_3)z_3\right)\\ \notag
=&(z_1-z_2)\exp\left(-\frac{\left(x_3+s_2+s_3\right)^2}{2 \nu  t}\right)\\ \notag
&\times \int \ud z_3
\: \exp\left(\frac{\nu t}{2}\left(z_3+\frac{x_3+s_2+s_3}{\nu t}\right)^2\right)
\\ \notag
&\quad\times 
\left[z_3+\frac{x_3+s_2+s_3}{\nu t} -\left(\frac{x_3+s_2+s_3}{\nu t}+z_1\right)\right]\\ \notag
&\quad\times \left[z_3+\frac{x_3+s_2+s_3}{\nu t} -\left(\frac{x_3+s_2+s_3}{\nu t}+z_2\right)\right]
\\ \notag
=&\imath (\nu t)^{-5/2}\sqrt{2 \pi } 
   \exp\left(-\frac{\left(x_3+s_2+s_3\right)^2}{2 \nu  t}\right)\\ \notag
   &\times
\left(z_1-z_2\right)\left(-\nu t+(\nu t z_1+x_3+s_2+s_3)(\nu t z_2+x_3+s_2+s_3)\right)\\
  =:& F_2(z_2).
  \label{E:F2}
\end{align}
The $\ud z_2$-integral can be obtained in the same way as above
\begin{align}
 \notag
\int \ud z_2& \exp\left(\frac{\nu t}{2}z_2^2+(x_2+s_1-s_3)z_2\right) F_2(z_2)\\ \notag
=&
\imath (\nu t)^{-5/2}\sqrt{2\pi}\:\exp\left(
-\frac{(x_3+s_2+s_3)^2}{2\nu t}
-\frac{(x_2+s_1-s_3)^2}{2\nu t}\right)\\  \notag
&\times \int \ud z_2 \exp\left(\frac{\nu t}{2}\left(z_2+\frac{x_2+s_1-s_3}{\nu t}\right)^2\right)
\\ \notag
&\quad\times
\left[z_1+\frac{x_2+s_1-s_3}{\nu t} - \left(z_2+\frac{x_2+s_1-s_3}{\nu t}\right)\right]\\ \notag
&\quad\times 
\left[-\nu t + (\nu tz_1 +x_3+s_2+s_3)\left(\nu t \left[z_2+\frac{x_2+s_1-s_3}{\nu t}\right]-s_1+s_2+2s_3-x_2+x_3\right)\right]
\\ \notag
=& \imath^2 (\nu t)^{-4} 2\pi (-s_1+s_2+2s_3-x_2+x_3) 
\exp\left(
-\frac{(x_3+s_2+s_3)^2}{2\nu t}
-\frac{(x_2+s_1-s_3)^2}{2\nu t}\right) 
\\ \notag
&\times
\left[\nu t + \left(\nu t z_1+x_3+s_2+s_3\right)
\left(\nu t z_1+x_2+s_1-s_3\right)\right]
\\=:& F_1(z_1).
\label{E:F1}
\end{align}
Similarly, one can calculate the $\ud z_1$-integral, which is equal to 
\begin{align}\notag
\int\ud z_1&
\exp\left(\frac{\nu t}{2}z_1^2+(x_1-s_1-s_2)z_1\right)F_1(z_1)\\ \notag
=&
\imath^2 (\nu t)^{-4}2\pi (-s_1+s_2+2s_3-x_2+x_3)\\ \notag
&\times \exp\left(
-\frac{(x_3+s_2+s_3)^2}{2\nu t}
-\frac{(x_2+s_1-s_3)^2}{2\nu t}
-\frac{(x_1-s_1-s_2)^2}{2\nu t}\right) \\ \notag
&\times \int\ud z_1
\exp\left(\frac{\nu t}{2}\left(z_1^2+\frac{x_1-s_1-s_2}{\nu t}\right)^2\right)\\ \notag
&\qquad\times\Bigg\{\nu t+\left[\nu t\left(z_1+\frac{x_1-s_1-s_2}{\nu t}\right)+s_1+2s_2+s_3-x_1+x_3\right]
\\ \notag
&\hspace{5em} \times \left[\nu t\left(z_1+\frac{x_1-s_1-s_2}{\nu t}\right)+2s_1+s_2-s_3-x_1+x_2\right]
\Bigg\}\\ \notag
=& \imath^3 (\nu t)^{-9/2} (2\pi)^{3/2}\\ \notag
&\times
\left(-s_1+s_2+2 s_3-x_2+x_3\right)
\left(s_1+2 s_2+s_3-x_1+x_3\right)
\left(2 s_1+s_2-s_3-x_1+x_2\right)
\\ \notag
&\times 
\exp\left(
-\frac{(x_3+s_2+s_3)^2}{2\nu t}
-\frac{(x_2+s_1-s_3)^2}{2\nu t}
-\frac{(x_1-s_1-s_2)^2}{2\nu t}\right) \\
=:&F_0(s_1,s_2,s_3). 
\label{E:F0}
\end{align}
Hence,
\begin{align}\notag
 &\E\left[u(t,x_1)u(t,x_2)u(t,x_3)\right]\\
 \notag
= & (\nu t)^{-9/2} (2\pi)^{-3/2}
\int_0^\infty \ud s_1 \: 
\exp\left(\frac{\lambda^2 s_1}{\nu}\right)
\int_0^\infty \ud s_2 \: 
\exp\left(\frac{\lambda^2 s_2}{\nu}\right)
\int_0^\infty \ud s_3 \: 
\exp\left(\frac{\lambda^2 s_3}{\nu}\right)\\
\notag
&\times
\left(-s_1+s_2+2 s_3-x_2+x_3\right)
\left(s_1+2 s_2+s_3-x_1+x_3\right)
\left(2 s_1+s_2-s_3-x_1+x_2\right) 
\\
&\times 
\exp\left(
-\frac{(x_3+s_2+s_3)^2}{2\nu t}
-\frac{(x_2+s_1-s_3)^2}{2\nu t}
-\frac{(x_1-s_1-s_2)^2}{2\nu t}\right).
\label{E_:3point}
\end{align}
Finally, for general $x_i\in\R$, $i=1,2,3$, without ordering, one obtains \eqref{E:3point} by symmetry.

{\bigskip\bf\noindent (2)~} In this part, we assume $x=x_1=x_2=x_3$. Then $F_0$ defined in \eqref{E:F0} is equal to
\begin{align}
\notag
F_0(s_1,s_2,s_3)=&\imath^3 (\nu t)^{-9/2} (2\pi)^{3/2} \\
\notag
&\times \left(-s_1+s_2+2 s_3\right)
\left(s_1+2 s_2+s_3\right)
\left(2 s_1+s_2-s_3\right) 
   \\
   &\times \exp \left(-\frac{s_1^2+s_2^2+s_3^2+s_1s_2-s_1s_3+s_2s_3}{\nu  t}
   -\frac{3x^2}{2\nu t}\right). 
\end{align}
Now we are going to apply Lemma \ref{L:RealInt} to evaluate the $\ud s_3$-integral.
First notice that
\begin{align}
\notag
&\left(-s_1+s_2+2 s_3\right)
\left(s_1+2 s_2+s_3\right)
\left(2 s_1+s_2-s_3\right) 
\\
\notag
&\quad=\left(-s_1+s_2\right) \left(2 s_1+s_2\right) \left(s_1+2s_2\right)
+3 \left(s_1^2+4 s_2 s_1+s_2^2\right)s_3
+3(s_1-s_2)s_3^2
-2s_3^3\\
&\quad=:\: a_0 + a_1s_3 + a_2 s_3^2 +a_3 s_3^3.
\end{align}
Hence, 
\begin{align}
 \notag
\int_0^\infty \ud s_3 \: \exp\left(\frac{\lambda^2}{\nu}s_3\right) F_0(s_1,s_2,s_3)
=&\imath^3 (\nu t)^{-9/2} (2\pi)^{3/2} \exp\left(-\frac{3x^2}{2 \nu t}-\frac{s_1^2+s_2^2+s_1s_2}{\nu t}\right)\\
\notag
&\times
\sum_{k=0}^3 a_k \int_0^\infty s_3^{k} \:\exp\left(-\frac{s_3^2}{\nu t}+\left(\frac{\lambda^2}{\nu}+\frac{s_1-s_2}{\nu t}\right) s_3\right)\ud s_3\\
\notag
=&\imath^3(\nu t)^{-9/2} (2\pi)^{3/2}
\exp\left(-\frac{3x^2}{2 \nu t}-\frac{s_1^2+s_2^2+s_1s_2}{\nu t}\right)\\
&\times
\sum_{k=0}^3 a_k \: \Lambda_k\left(\frac{\lambda^2}{\nu}+\frac{s_1-s_2}{\nu t},\nu t\right),
\end{align}
where the functions $\Lambda_k(\cdot,\cdot)$ are defined in \eqref{E:DefLambda}.
After some (tedious) expansion and simplification, 
we see that
\begin{align}
\notag
\sum_{k=0}^3 a_k& \Lambda_k\left(\frac{\lambda^2}{\nu} + \frac{s_1-s_2}{\nu t},\nu t\right)\\ 
\notag
 =&\quad  b_0 + b_1 s_2 + b_2 s_2^2 \\
 \notag
 &-\frac{\sqrt{\pi\nu} \lambda^2 t^{3/2}}{4} 
   \exp\left(\frac{\left(s_1-s_2+\lambda^2 t\right)^2}{4 \nu t}\right) \left(t \left(6 \nu +\lambda^4 t\right)-9
   \left(s_1+s_2\right)^2\right)\\
\notag   
 &+\frac{\sqrt{\pi\nu} \lambda^2 t^{3/2}}{8} 
   \left(t \left(6 \nu +\lambda^4 t\right)-9
   \left(s_1+s_2\right)^2\right)
   \exp\left(\frac{\left(s_1-s_2+\lambda^2 t\right)^2}{4 \nu t}\right)\Erfc\left(\frac{s_1-s_2+\lambda^2 t}{2
   \sqrt{\nu t}}\right)\\
   =:& A_1(s_2) + A_2(s_2) +A_3(s_2),
\end{align}
where
\begin{align}
b_0 &= \nu t \left(2s_1^2+\frac{\lambda^2 t}{4} s_1 -\nu t-\frac{\lambda^4 t^2}{4}\right),\qquad
b_1 = \nu t \left(5s_1-\frac{\lambda^2 t}{4}\right)
\quad\text{and}\quad
b_2 =2 \nu t.
\end{align}
Therefore, the $\ud s_2$-integral is equal to
\begin{multline}
\int_0^\infty \ud s_2 \:\exp\left(\frac{\lambda^2}{\nu}s_2\right)
\int_0^\infty \ud s_3 \: \exp\left(\frac{\lambda^2}{\nu}s_3\right) F_0(s_1,s_2,s_3) 
=\imath^3 (\nu t)^{-9/2} (2\pi)^{3/2} \\
\times \exp\left(-\frac{3x^2}{2 \nu t}-\frac{s_1^2}{\nu t}\right)
\sum_{k=0}^3\int_0^\infty\ud s_2\: \exp\left(-\frac{s_2^2}{\nu t} + \left(\frac{\lambda^2}{\nu}-\frac{s_1}{\nu t}\right) s_2 \right)
  A_k(s_2).
\end{multline}
By Lemma \ref{L:RealInt},
\begin{align}
 \notag
\int_0^\infty\ud s_2\: &\exp\left(-\frac{s_2^2}{\nu t} + \left(\frac{\lambda^2}{\nu}-\frac{s_1}{\nu t}\right) s_2 \right) A_1(s_2)\\
\notag
=&
\sum_{k=0}^2 b_k \Lambda_k\left(\frac{\lambda^2}{\nu}-\frac{s_1}{\nu t},\nu t\right)\\
\notag
=&\quad \frac{1}{8} \nu ^2 t^2 \left(16 s_1+3 \lambda^2
   t\right)\\
 &+\frac{1}{16} \sqrt{\pi} \lambda^2\nu^{3/2}t^{5/2}
   \left(15 s_1+\lambda^2 t\right)
   \exp\left(\frac{\left(s_1-\lambda^2 t\right)^2}{4 \nu 
   t}\right) \left(2-\Erfc\left(\frac{\lambda^2 t-s_1}{2
   \sqrt{\nu t}}\right)\right).
\end{align}
Similarly, for the $A_2$ term,  we have that
\begin{align}\notag
\int_0^\infty\ud s_2\:& \exp\left(-\frac{s_2^2}{\nu t} + \left(\frac{\lambda^2}{\nu}-\frac{s_1}{\nu t}\right) s_2 \right) A_2(s_2) \\
\notag
=&\frac{1}{4} \sqrt{\pi\nu} \lambda^2 t^{3/2}
\exp\left(\frac{\left(s_1+\lambda^2 t\right)^2}{4 \nu t}\right)
\\
\notag
&\times \int_0^\infty\ud s_2\: \exp\left(-\frac{3s_2^2}{4\nu t} + \left(\frac{\lambda^2}{2\nu}-\frac{3s_1}{2\nu t}\right) s_2 \right)
   \left(9\left(s_1+s_2\right)^2-t \left(6 \nu +\lambda^4 t\right)\right)
\\
=&\frac{1}{2} \sqrt{\pi } \lambda^2 \nu^{3/2}  t^{5/2}
   \left(3 s_1+\lambda^2 t\right)
   \exp\left(\frac{\left(s_1+\lambda^2 t\right)^2}{4 \nu t}\right),
\end{align}
where we have applied Lemma \ref{L:RealInt} in the last step.
The integration with respect to $A_3$ is much more complicated. Instead, we claim that for $r>0$,
\begin{align}
\notag
I(r):=&\int_0^r\ud s_2\: \exp\left(-\frac{s_2^2}{\nu t} + \left(\frac{\lambda^2}{\nu}-\frac{s_1}{\nu t}\right) s_2 \right) A_3(s_2) \\ \notag
=&\frac{1}{8} \sqrt{\pi\nu} \lambda^2 t^{3/2}
\int_0^r\ud s_2\: \exp\left(-\frac{s_2^2}{\nu t} + \left(\frac{\lambda^2}{\nu}-\frac{s_1}{\nu t}\right) s_2 \right)
   \left(t \left(6 \nu +\lambda^4 t\right)-9\left(s_1+s_2\right)^2\right)
\\ \notag
&\times \exp\left(\frac{\left(s_1-s_2+\lambda^2 t\right)^2}{4 \nu t}\right)\Erfc\left(\frac{s_1-s_2+\lambda^2 t}{2
   \sqrt{\nu t}}\right)\\ \notag
=& \frac{1}{16} \lambda^2 \nu  t^2 
   \Bigg(\sqrt{\pi\nu t}
   \Bigg[\lambda^2 t  \Bigg\{-4 \exp\left(\frac{(s_1+\lambda^2 t)^2}{4\nu t}\right) \Erfc\left(\frac{s_1+\lambda^2
   t}{2 \sqrt{\nu  t}}\right)\\ \notag
   & +5 
   \exp\left(\frac{(s_1-\lambda^2 t)^2}{4\nu t}\right)
   \left(\Erfc\left(\frac{2 r+s_1-\lambda^2t}{2 \sqrt{\nu 
   t}}\right)+\Erfc\left(\frac{\lambda^2 t-s_1}{2
   \sqrt{\nu  t}}\right)\right)\Bigg\}\\ \notag
   &+4 \left(3 r+\lambda
   ^2 t\right) \exp\left(-\frac{3 r^2}{4 \nu  t}+\frac{r \left(2 \lambda^2 t-6
   s_1\right)}{4 \nu  t}+\frac{2 \lambda^2 s_1
   t+s_1^2+\lambda^4 t^2}{4 \nu  t}\right)
   \Erfc\left(\frac{-r+s_1+\lambda^2 t}{2
   \sqrt{\nu  t}}\right)\Bigg]\\ \notag
   &+3 \sqrt{\pi\nu  t} s_1
  \Bigg[
  -4 \exp\left(\frac{(s_1+\lambda^2 t)^2}{4\nu t }\right)
   \Erfc\left(\frac{s_1+\lambda^2 t}{2 \sqrt{\nu t}}\right) \\ \notag
   &+\exp\left(\frac{(s_1-\lambda^2 t)^2}{4 \nu t}\right)
   \Erfc\left(\frac{\lambda^2 t-s_1}{2 \sqrt{\nu t}}\right)\\ \notag
   &+\exp\left(\frac{(s_1-\lambda^2 t)^2}{4 \nu t}\right) 
   \Erfc\left(\frac{2 r+s_1-\lambda^2 t}{2
   \sqrt{\nu  t}}\right)\\ \notag
   &+4 \exp\left(-\frac{3 r^2}{4 \nu  t}+\frac{r \left(2 \lambda^2 t-6
   s_1\right)}{4 \nu  t}+\frac{2 \lambda^2 s_1
   t+s_1^2+\lambda^4 t^2}{4 \nu  t}\right)
   \Erfc\left(\frac{-r+s_1+\lambda^2 t}{2
   \sqrt{\nu  t}}\right)\\ \notag
   &-2 \exp\left(\frac{(s_1-\lambda^2 t)^2}{4 \nu  t}\right)\Bigg]\\ 
   &-2 t \Bigg[5 \sqrt{\pi\nu t}
   \: \lambda^2 
   \exp\left(\frac{(s_1-\lambda^2 t)^2}{4\nu t}\right)
   +3 \nu 
   -3 \nu  \exp\left(-\frac{r^2}{\nu  t}-\frac{r \left(s_1-\lambda^2
   t\right)}{\nu  t}\right)\Bigg]\Bigg),
\end{align}
which can be verified directly by differentiating both sides.
Because $\Erfc(x)\sim x^{-1}\pi^{-1/2} e^{-x^2}$ as $x\rightarrow\infty$ and $\lim_{x\rightarrow-\infty}\Erfc(x)=2$ , we see that 
\begin{align}\notag
\lim_{r\rightarrow\infty}I(r) 
=& \frac{1}{16} \lambda^2 \nu  t^2 
   \Bigg(\sqrt{\pi\nu t}\:
   \Bigg[\lambda^2 t \:  \Bigg\{-4 \exp\left(\frac{(s_1+\lambda^2 t)^2}{4\nu t}\right) \Erfc\left(\frac{s_1+\lambda^2
   t}{2 \sqrt{\nu  t}}\right)\\ \notag
   &+5\:
   \exp\left(\frac{(s_1-\lambda^2 t)^2}{4\nu t}\right)\Erfc\left(\frac{\lambda^2 t-s_1}{2
   \sqrt{\nu  t}}\right)\Bigg\}+0\Bigg]\\ \notag
   &+3 \sqrt{\pi\nu  t} s_1
   \Bigg[-4 \exp\left(\frac{(s_1+\lambda^2 t)^2}{4\nu t}\right)
   \Erfc\left(\frac{s_1+\lambda^2 t}{2 \sqrt{\nu t}}\right) \\ \notag
   &+\exp\left(\frac{(s_1-\lambda^2 t)^2}{4 \nu  t}\right)
   \Erfc\left(\frac{\lambda^2 t-s_1}{2 \sqrt{\nu t}}\right)\\ \notag
   &+0+0-2 \exp\left(\frac{(s_1-\lambda^2 t)^2}{4 \nu  t}\right)\Bigg]\\ 
   &-2 t \Bigg[5 \sqrt{\pi\nu t}\:
   \lambda^2 \exp\left(\frac{(s_1-\lambda^2 t)^2}{4\nu t}\right)+3 \nu 
   -0\Bigg]\Bigg).
\end{align}
Therefore,
\begin{align}\notag
\int_0^\infty\ud s_2\:& \exp\left(-\frac{s_2^2}{\nu t} + \left(\frac{\lambda^2}{\nu}-\frac{s_1}{\nu t}\right) s_2 \right) A_3(s_2)\\ \notag
=&   \frac{1}{16} \lambda^2 \nu  t^2 
   \Bigg(\sqrt{\pi\nu}\:
   \lambda^2 t^{3/2} \:  \Bigg\{-4 \exp\left(\frac{(s_1+\lambda^2 t)^2}{4\nu t}\right) \Erfc\left(\frac{s_1+\lambda^2
   t}{2 \sqrt{\nu  t}}\right)\\ \notag
   &+5\:
   \exp\left(\frac{(s_1-\lambda^2 t)^2}{4\nu t}\right)\Erfc\left(\frac{\lambda^2 t-s_1}{2
   \sqrt{\nu  t}}\right)\Bigg\}\\ \notag
   &+3 \sqrt{\pi\nu  t} s_1
   \Bigg[-4\: \exp\left(\frac{(s_1+\lambda^2 t)^2}{4\nu t}\right)\Erfc\left(\frac{s_1+\lambda^2 t}{2 \sqrt{\nu t}}\right)\\ \notag
   &+\exp\left(\frac{(s_1-\lambda^2 t)^2}{4 \nu  t}\right)
   \Erfc\left(\frac{\lambda^2 t-s_1}{2 \sqrt{\nu t}}\right)-2 \exp\left(\frac{(s_1-\lambda^2 t)^2}{4 \nu  t}\right)\Bigg]\\
   &-2 t \Bigg[5 \sqrt{\pi \nu t}\:
   \lambda^2  \exp\left(\frac{(s_1-\lambda^2 t)^2}{4\nu t}\right)+3 \nu \Bigg]\Bigg).
\end{align}
Finally, combining all these three integrals, we see that
\begin{align}\notag
\exp\left(\frac{\lambda^2}{\nu} s_1\right)&\int_0^\infty \ud s_2 \: \exp\left(\frac{\lambda^2}{\nu}s_2\right)
\int_0^\infty \ud s_3 \: \exp\left(\frac{\lambda^2}{\nu}s_3\right) F_0(s_1,s_2,s_3) \\ \notag
 =&  \imath^3 \exp\left(-\frac{3x^2}{2\nu t}\right)\\ \notag
 & \times \Bigg[\frac{\pi^2 \lambda^2  }{\sqrt{2}\:\nu^3t^2}
 \left(3
   s_1+\lambda^2 t\right) \exp\left(\frac{\lambda^4t}{\nu}-\frac{3\left(s_1-\lambda^2
   t\right)^2}{4 \nu  t}\right)
   \left(2-\Erfc\left(\frac{s_1+\lambda^2 t}{2
   \sqrt{\nu  t}}\right)\right)
   \\ \notag
 &\quad+\frac{\pi^2 \lambda^2 
 \left(3 s_1-\lambda^2 t\right)}{\sqrt{2}\: \nu^3 t^2 }
      \exp\left(
      \frac{t\lambda^4}{3\nu}-\frac{3\left(s_1-\lambda^2 t/3\right)^2}{4 \nu t}\right) 
      \left(2-\Erfc\left(\frac{\lambda^2 t-s_1}{2\sqrt{\nu  t}}\right)\right)\\ \notag
   &\quad+ 2^{5/2}(\nu t)^{-5/2} \pi^{3/2} s_1
   \exp\left(\frac{\lambda^4t}{4\nu}-\frac{\left(s_1-\lambda^2 t/2\right)^2}{\nu t}\right) \Bigg]\\
   =:& \Theta(s_1).
\end{align}
Therefore, the third moment is equal to 
\begin{align}
\E\left[u(t,x)^3\right]=
\frac{1}{(2\pi\imath)^3}
\int_0^\infty  
\Theta(s_1)\ud s_1.
\end{align}
Because $2-\Erfc(x)=2\Phi\left(\sqrt{2} x\right)$ and thanks to Lemma \ref{L:asbcsd},
\begin{align}\notag
\frac{1}{(2\pi)^3}& \exp\left(-\frac{3x^2}{2\nu t}\right)\int_0^\infty 
2^{5/2}(\nu t)^{-5/2} \pi^{3/2} s_1
   \exp\left(
   \frac{\lambda^4t}{4\nu}
   -\frac{\left(s_1-\lambda^2 t/2\right)^2}{\nu t}\right)
   \ud s_1\\ \notag
=&\frac{1}{\sqrt{2} \pi^{3/2}
   (\nu t)^{5/2}}\exp\left(-\frac{3x^2}{2\nu t}\right)
\int_0^\infty s_1 \exp\left(-\frac{s_1 \left(s_1-
   \lambda^2 t\right)}{\nu t}\right)\ud s_1\\
=&\frac{1}{(2\pi\nu t)^{3/2}}
\exp\left(-\frac{3x^2}{2\nu t}\right)
   +\frac{\lambda^2}{2\sqrt{2} \pi\nu^2 t}
   \exp\left(\frac{\lambda^4 t}{4\nu}-\frac{3x^2}{2\nu t}\right) \left(1-\Phi\left(-\lambda^2 \sqrt{\frac{t}{2\nu}}\right)\right),
\end{align}
one can obtain \eqref{E:3moment} after some simplification.

{\bigskip\bf\noindent (3)~}
By the mean-value theorem, there exit two constants $a,b$ in the range given by \eqref{E:range-ab} such that
\begin{align}
\notag
\E[u(t,x)^3] 
=& \quad \frac{b \lambda^2  }{2^{5/2}\pi\:\nu^3t^2}\exp\left(\frac{\lambda^2 t}{4\nu}-\frac{3x^2}{2\nu t}\right)
\int_0^\infty  
 \left(3
   s_1+\lambda^2 t\right) 
   \exp\left(
   -\frac{3s_1(s_1-2\lambda^2t)}{4 \nu  t}\right)
 \ud s_1  
   \\
   \notag
 &+ 
 \frac{a \lambda^2 }{2^{5/2}\pi\:\nu^3t^2}\exp\left(\frac{\lambda^2 t}{4\nu}-\frac{3x^2}{2\nu t}\right)
  \int_0^\infty \left(3 s_1-\lambda^2 t\right)
      \exp\left(-\frac{s_1(3s_1-2\lambda^2 t)}{4 \nu t}\right) 
      \ud s_1\\ \notag
   &+\frac{1}{(2\pi\nu t)^{3/2}}
\exp\left(-\frac{3x^2}{2\nu t}\right)\\
&
   +\frac{\lambda^2}{2\sqrt{2} \pi\nu^2 t}
   \exp\left(\frac{\lambda^4 t}{4\nu}-\frac{3x^2}{2\nu t}\right) \left(1-\Phi\left(-\lambda^2 \sqrt{\frac{t}{2\nu}}\right)\right).
\end{align}
Then one can apply Lemma \ref{L:asbcsd} to evaluate these two integrals. 
This completes the whole proof of Theorem \ref{T:3rdMoment}.
\end{proof}

\section*{Acknowledgements}
L.C. would like to thank {\em Robert Dalang} and {\em Davar Khoshnevisan} for many helpful comments.
Khoshnevisan asked L.C. in 2014 whether one could obtain an explicit formula for the third moment, which motivated the current study. There was no much progress on this problem until when L.C. was vising the {\it Simons Center for Geometry and Physics} for the conference 
-- {\it Stochastic Partial Differential Equations} (May 16-20, 2016), {\em Ivan Corwin} pointed out to L.C. the transform from \eqref{E:BCMac} to \eqref{E:BCMac2}. This became the starting point of the whole calculation in this paper. Here L.C. would like to express his sincere gratitude to him.
Finally, L.C. would also like to thank the organizer {\em Martin Hairer} for the wonderful conference.

\begin{small}
\bigskip


\end{small}


\begin{thebibliography}{999}
%
%

\bibitem{BertiniCancrini94Intermittence}
Bertini, Lorenzo and Nicoletta Cancrini.
\newblock The stochastic heat equation: {F}eynman-{K}ac formula and
  intermittence.
\newblock {\em J. Statist. Phys.}, 78(5-6):1377--1401, 1995.



\bibitem{BC14Moment}
Borodin, Alexei and Ivan Corwin.
\newblock Moments and Lyapunov exponents for the parabolic Anderson model.
\newblock {\em Ann. Appl. Probab.,} 24 (2014), no. 3, 1172--1198.



\bibitem{CarmonaMolchanov94}
Carmona, Ren\'e~A.  and Stanislav~A. Molchanov.
\newblock Parabolic {A}nderson problem and intermittency.
\newblock {\em Mem. Amer. Math. Soc.}, 108(518), 1994.
%
%
%
\bibitem{ChenDalang13Heat}
Chen, Le and Robert C. Dalang.
\newblock Moments and growth indices for nonlinear stochastic heat equation
  with rough initial conditions.
\newblock {\em Ann.\ Probab.}
Vol. 43, No. 6, 3006--3051, 2015.

\bibitem{CK15SHE}
Chen, Le and Kunwoo Kim.
\newblock Nonlinear stochastic heat equation driven by spatially colored noise: moments and intermittency.
\newblock Preprint arXiv:1510.06046, 2015.

\bibitem{ChenKim14Comp}
Chen, Le and Kunwoo Kim.
\newblock On comparison principle and strict positivity of solutions to the nonlinear stochastic fractional heat equations.
\newblock {\it Ann. Inst. Henri Poincar\'e Probab. Stat.}. to appear, 2016. 




\bibitem{ChenHuNualart15Two}
Chen, Le, Yaozhong Hu and David Nualart.
\newblock Two-point correlation function and Feynman-Kac formula for the stochastic heat equation.
\newblock {\em Potential Anal.}, to appear, 2016.

\bibitem{ChenX15}
\newblock Chen, Xia.
\newblock Precise intermittency for the parabolic Anderson equation with
an $(1+1)$-dimensional time–space white noise.
\newblock {\it Ann. Inst. Henri Poincar\'e Probab. Stat.} 2015, Vol. 51, No. 4, 1486--1499.


\bibitem{ConusKhosh10Farthest}
Conus, Daniel and Davar Khoshnevisan.
\newblock On the existence and position of the farthest peaks of a family of
  stochastic heat and wave equations.
\newblock {\em Probab. Theory Related Fields},  152(3-4) (2012) 681--701.

%
%
%
%
%
%
\bibitem{FK09EJP} Foondun, Mohammud and Davar Khoshnevisan.
\newblock Intermittence and nonlinear parabolic stochastic partial differential equations.
\newblock {\it Electron. J. Probab.}\ 14(21) (2009) 548--568.
\bibitem{HLN15}
Huang, Jingyu, Khoa L\^e and David Nualart.
\newblock Large time asymptotics for the parabolic Anderson model driven by spatially correlated noise.
\newblock Preprint arXiv:1509.00897v3, 2015.
%
\bibitem{HLN16}
Huang, Jingyu, Khoa L\^e and David Nualart.
\newblock Large time asymptotics for the parabolic Anderson model driven by space and time correlated noise.
\newblock Preprint arXiv:1607.00682, 2016.


%

%
%

%


%

%



\bibitem{Walsh} Walsh, John B.
	\newblock {\it An Introduction to Stochastic Partial Differential Equations}.
	\newblock In: \`Ecole d'\`et\'e de probabilit\'es de
	Saint-Flour, XIV---1984, 265--439.
	Lecture Notes in Math.\ 1180, Springer, Berlin, 1986.
%
\end{thebibliography}
\end{document}